\documentclass[11pt,a4paper,reqno]{amsart}
\usepackage[a4paper,margin=28mm]{geometry}
\usepackage[T1]{fontenc}
\usepackage{lmodern}
\usepackage{amsmath,amssymb,amsthm}
\usepackage{microtype}
\usepackage{needspace}
\usepackage[colorlinks=true,linkcolor=blue,citecolor=blue,urlcolor=blue]{hyperref}
\newtheorem{maintheorem}{Theorem}

\newtheorem{maincorollary}[maintheorem]{Corollary}
\newtheorem{theorem}{Theorem}[section]
\newtheorem{lemma}[theorem]{Lemma}
\newtheorem{proposition}[theorem]{Proposition}

\theoremstyle{remark}
\newtheorem{remark}[theorem]{Remark}
\numberwithin{equation}{section}
\newcommand{\N}{\mathbb N}
\newcommand{\C}{\mathbb C}
\newcommand{\E}{\mathcal E}
\newcommand{\M}{\mathcal M}
\newcommand{\norm}[1]{\lVert #1\rVert}
\newcommand{\abs}[1]{\lvert #1\rvert}

\allowdisplaybreaks[1]
\hypersetup{pdftitle={A solution to Michael's problem for Fr\'echet algebras},pdfauthor={Antonio Acuaviva},pdfsubject={Boundedness of characters, automatic continuity, and Oka approximation}}
\subjclass[2020]{Primary 46H40; Secondary 46J05, 32Q56}
\keywords{Automatic continuity, character, Fr\'echet algebra, Arens--Michael algebra, Oka manifold, holomorphic section}

\begin{document}
\title[Michael's problem for Fr\'echet algebras]{A solution to Michael's problem for Fr\'echet algebras}
\author[A.~Acuaviva]{Antonio Acuaviva}
\address{(A.~Acuaviva) School of Mathematical Sciences, Charles Carter Building, Lancaster University, Lancaster LA1 4YX, United Kingdom}
\email{ahacua@gmail.com}
\date{22 September 2026}
\begin{abstract}
We prove that every character on a commutative complex complete Hausdorff locally multiplicatively convex algebra, not necessarily unital, is bounded on bounded sets. Consequently, every character on a commutative complex Fr\'echet--Arens--Michael algebra is continuous, giving an affirmative answer to Michael's problem. In the Fr\'echet case, each character is bounded by one seminorm from any increasing defining sequence of submultiplicative seminorms.
\end{abstract}
\maketitle

\begin{center}
\emph{Companion Lean~4 formalisation:} \url{https://michaels-problem.github.io/}
\end{center}

\section{Introduction}\label{sec:introduction}

A \emph{character} on a complex algebra is a nonzero complex-linear multiplicative functional. Every character $\chi$ on a complex Banach algebra $A$ is continuous, since
\begin{equation*}
  \abs{\chi(a)}\le r_A(a)\le\norm{a}\qquad(a\in A),
\end{equation*}
where $r_A(a)$ denotes the spectral radius. In the unital case, the estimate follows from $\chi(a)\in\sigma_A(a)$, and unitisation gives the general case. This classical result is part of Gelfand's character theory of commutative Banach algebras~\cite[Sections~5--6]{Gelfand1941} and provides a basic example of \emph{automatic continuity}: algebraic conditions on a map imply its continuity. The same theme appears in Johnson's theorem that every surjective algebra homomorphism from a Banach algebra onto a semisimple Banach algebra is continuous~\cite{Johnson1967}. For a broader account of automatic continuity in Banach algebras, see Dales's monograph~\cite{Dales2000}.

Arens~\cite{Arens1952} and Michael~\cite{Michael1952} extended Banach algebra theory to \emph{locally multiplicatively convex algebras}, whose topology is defined by submultiplicative seminorms. A complete Hausdorff algebra of this kind is called an \emph{Arens--Michael algebra}. In the metrisable case, we call it a \emph{Fr\'echet--Arens--Michael algebra}; its topology can be defined by an increasing sequence $(p_n)_{n=1}^{\infty}$ of submultiplicative seminorms, and it is a projective limit of the Banach algebras obtained by completing the normed quotients $A/\ker p_n$.

The spectral argument above does not extend directly to this setting, since spectra need no longer be bounded. For example, the coordinate function $z$ has spectrum $\C$ in the algebra $\mathcal H(\C)$ of entire functions with the topology of uniform convergence on compact sets. Spectral inclusion alone therefore gives no estimate in terms of a fixed defining seminorm, whereas continuity of $\chi$ requires one such estimate to hold for every $a\in A$.

Michael asked in 1952 whether every character on a commutative complex Fr\'echet--Arens--Michael algebra is continuous~\cite[Section~12, Question~1]{Michael1952}. This has come to be known as \emph{Michael's problem}. He also asked whether every character on a complete commutative locally multiplicatively convex algebra maps bounded sets to bounded sets, even without assuming metrizability~\cite[Section~12, Question~2]{Michael1952}. Our main result gives an affirmative answer.

\begin{maintheorem}\label{thm:boundedness}
Let $A$ be a commutative complex Arens--Michael algebra. Every character $\chi\colon A\to\C$ is bounded on bounded subsets of $A$.
\end{maintheorem}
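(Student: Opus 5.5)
The plan is to argue by contradiction, reducing the statement to a problem about holomorphic maps between finite-dimensional spaces. That problem would then be attacked with an Oka-type approximation argument; this last step is where I expect essentially all the difficulty to lie.

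\textbf{Reduction to a sequence.} Suppose $B\subseteq A$ is bounded and $\chi$ is unbounded on $B$. Choose $b_n\in B$ with $\abs{\chi(b_n)}$ growing as fast as we like. Boundedness of $B$ means that for every continuous submultiplicative seminorm $p$ there is $C_p$ with $p(b_n)\le C_p$ for all $n$. Completeness then lets us form convergent elements such as
\begin{equation*}
x=\sum_n \varepsilon_n b_n ,
\end{equation*}
and more generally entire functional calculus $g(b_1,\dots,b_k)$ for entire functions $g$ on $\C^k$ of suitable growth, since the series converge in every $p$. A first lemma I would prove is that $\chi$ commutes with polynomial calculus, which is trivial, and then with entire calculus in finitely many variables. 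For the entire case I would not use continuity of $\chi$. Instead, I would write $g(b)-g(\chi(b))$ as an element of the ideal generated by the $b_i-\chi(b_i)$, with holomorphic coefficients (Hefer decomposition), and apply $\chi$ to both sides. In the non-unital case I would first pass to the unitisation, which is again Arens--Michael, and extend $\chi$ to it.

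\textbf{Encoding as a holomorphic inverse system.} The goal is to use the freedom of choosing entire functions to build an element $a\in A$, together with auxiliary elements $a_1,a_2,\dots$, satisfying recursive relations
\begin{equation*}
a_n=F_n(b_n,a_{n+1}),
\end{equation*}
with $F_n$ holomorphic. Two requirements must hold simultaneously:
\begin{enumerate}
\item the infinite composition converges in every seminorm, using the uniform bounds $C_p$;
\item applying $\chi$ forces the sequence $\bigl(\chi(b_n),\chi(a_n)\bigr)$ to lie in the projective limit of the maps $F_n$.
\end{enumerate}
This is the Dixon--Esterle type reformulation. One then wants to choose the $F_n$ so that this projective limit avoids every sequence whose values grow like $\chi(b_n)$. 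Since $\chi(a)$ is a single complex number, this yields the contradiction.

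\textbf{Main obstacle.} The crucial and hardest step is the existence of such $F_n$. It amounts to constructing holomorphic maps, or holomorphic sections of suitable fibrations over $\C^{n}$, that:
\begin{enumerate}
\item approximate prescribed maps on the compact polydiscs of radius governed by $C_p$, uniformly enough that the compositions converge;
\item at the same time omit the escaping targets.
\end{enumerate}
I would attempt this with the Oka principle. The relevant fibres (complements of suitable algebraic sets, or $\C^k$ minus a point) are Oka manifolds, so Runge-type approximation holds with interpolation. One then approximates inductively, with errors summable at each stage, while forcing the section to avoid the bad set at the specific points $\chi(b_n)$. Once this holds for every bounded sequence, boundedness of $\chi$ on $B$ follows.
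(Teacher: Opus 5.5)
\textbf{There is a genuine gap: the step that carries all the difficulty is not carried out.} Your preliminary steps are sound. These are the choice of $b_n$, entire calculus in finitely many variables, Hefer's decomposition to get $\chi(G(x))=G(\chi(x))$ without continuity, and unitisation. The nested relations $a_n=F_n(b_n,a_{n+1})$ would indeed give $\chi(a_n)=F_n(c_n,\chi(a_{n+1}))$ with $c_n=\chi(b_n)$.

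After that, the whole theorem reduces to finding entire maps $F_n$ with two properties. First, the nested compositions must converge in every seminorm. Since $C_p$ is unbounded in $p$, this forces approximation on growing compacts that must avoid the points where escape is imposed. Second, no sequence $(u_n)$ may satisfy $u_n=F_n(c_n,u_{n+1})$ for all $n$. For this you offer only ``I would attempt this with the Oka principle''.

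As written, with $a_n\in A$ and $\chi(a)$ a single complex number, the plan cannot succeed. The maps $\phi_n=F_n(c_n,\cdot)$ are then entire self-maps of $\C$. If they are nonconstant, each composite $\phi_n\circ\cdots\circ\phi_N$ omits at most one value by Picard. So these decreasing images stabilise in $N$ to sets $E_n$ with $\phi_n(E_{n+1})=E_n$, and a sequence $(u_n)$ can always be chosen inductively; constant $\phi_j$ only make this easier. You need $a_n\in A^d$ with $d\ge 2$, as in Remark~\ref{rem:two-components}.

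Moreover, avoidance ``at the specific points'' with fibres such as $\C^k$ minus a point is the wrong kind of condition. Omitted values are not stable under limits, and the unknown values $\chi(a_{n+1})$ range over all of $\C^d$. What is needed is a quantitative escape along whole fibres that survives the limit.

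The paper proceeds differently. After transferring to $\E$ with $\Phi(z_j)=4j$ (Proposition~\ref{prop:reduction}), it avoids compositions altogether. It builds one pair $F=(f,g)\in\E^2$ with $\norm{F(w)}_2\ge k$ on all of $S_k$. Inductively, $F_N$ satisfies $\norm{F_N}_2>k$ on the whole affine flag $L_{N,1}\supsetneq\cdots\supsetneq L_{N,N}$. At the same time it approximates $F_{N-1}$ on the polydisc $K_N$ of radius $2N$, which misses the new constraint point $\xi_N=(4,\ldots,4N)$. This is Lemma~\ref{lem:flag-approximation}: Forstneri\v{c}'s theorem for sections of a stratified submersion whose fibres $\C^2\setminus\overline B(r_j)$ are Oka by Kusakabe. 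Cauchy estimates then give convergence in every $q_r$.

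Because $f$ and $g$ depend on infinitely many variables, your Hefer argument cannot evaluate $\Phi(f)$ and $\Phi(g)$. The paper instead applies Arens's finite joint-spectrum theorem to $z_1,\ldots,z_k,f,g$. This gives a continuous character, hence evaluation at some $w^{(k)}\in S_k$, with $(\Phi(f),\Phi(g))=F(w^{(k)})$. Therefore $\norm{(\Phi(f),\Phi(g))}_2\ge k$ for every $k$, which is impossible.

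Your plan contains neither this quantitative escape along affine subspaces nor this interpolation step. Some substitute for both is needed before the argument closes.
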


As Michael noted, a bounded linear functional on a metrisable locally convex space is continuous. Theorem~\ref{thm:boundedness} therefore yields automatic continuity in the Fr\'echet case.

\begin{maincorollary}\label{cor:continuity}
Let $A$ be a commutative complex Fr\'echet--Arens--Michael algebra. Every character $\chi\colon A\to\C$ is continuous. More precisely, for every increasing defining sequence $(p_n)_{n=1}^{\infty}$ of submultiplicative seminorms, there is $n\in\N$ such that
\begin{equation*}
  \abs{\chi(a)}\le p_n(a)\qquad(a\in A).
\end{equation*}
\end{maincorollary}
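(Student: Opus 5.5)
We derive Corollary~\ref{cor:continuity} from Theorem~\ref{thm:boundedness}, in two steps: first continuity, then the sharp estimate by a single seminorm.

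\emph{Continuity.} Fix an increasing defining sequence $(p_n)$ and suppose that for every $n$ there is no constant $C$ with $\abs{\chi(a)}\le C\,p_n(a)$ for all $a\in A$. Then for each $n$ we can choose $a_n\in A$ with $p_n(a_n)\le 2^{-n}$ and $\abs{\chi(a_n)}\ge n$. If $p_n(a)=0$ but $\chi(a)\neq0$, take $a_n$ a large multiple of $a$; otherwise rescale a violating element. Since the $p_n$ increase, for each fixed $k$ we have $p_k(a_n)\le p_n(a_n)\le 2^{-n}$ for all $n\ge k$. Hence $\sup_n p_k(a_n)<\infty$ for every $k$, so the set $\{a_n\}$ is bounded; indeed $a_n\to0$. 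But $\chi$ is unbounded on this set, contradicting Theorem~\ref{thm:boundedness}. Therefore there exist $n$ and $C$ with $\abs{\chi(a)}\le C\,p_n(a)$ for all $a\in A$, and $\chi$ is continuous.

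\emph{Removing the constant.} This uses submultiplicativity and is the only step needing care. With $n$ and $C$ as above, multiplicativity of $\chi$ and submultiplicativity of $p_n$ give, for every $a\in A$ and every $k\in\N$,
\begin{equation*}
  \abs{\chi(a)}^k=\abs{\chi(a^k)}\le C\,p_n(a^k)\le C\,p_n(a)^k .
\end{equation*}
Taking $k$-th roots yields $\abs{\chi(a)}\le C^{1/k}p_n(a)$, and letting $k\to\infty$ gives $\abs{\chi(a)}\le p_n(a)$. This holds for every $a\in A$, with no unit required.

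The real difficulty therefore lies entirely in Theorem~\ref{thm:boundedness}, which we are taking as given here. The only points to verify in the reduction are the following.
\begin{itemize}
  \item The constructed sequence is bounded in the locally convex sense, meaning that each $p_k$ is bounded on it. This follows from the monotonicity of $(p_n)$, since only finitely many terms of the sequence precede index $k$.
  \item The root-extraction argument uses only multiplicativity of $\chi$ and submultiplicativity of $p_n$.
\end{itemize}
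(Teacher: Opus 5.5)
Your proposal is correct and follows the paper's own proof essentially step for step: a discontinuous $\chi$ yields a null (hence bounded) sequence $x_n$ with $p_n(x_n)\le 2^{-n}$ and $\abs{\chi(x_n)}\ge n$, contradicting Theorem~\ref{thm:boundedness}, and the constant $C$ is then removed by applying the estimate to powers and taking roots. Your extra care with the case $p_n(a)=0$ and with using monotonicity of $(p_n)$ to verify boundedness fills in details the paper leaves implicit.
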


We emphasise that neither Theorem~\ref{thm:boundedness} nor Corollary~\ref{cor:continuity} requires the algebra to be unital. We first prove boundedness for unital algebras and then pass to the general case by unitisation.

An early partial answer follows from Arens's finite joint-spectrum theorem~\cite[Theorem~6.3; see Theorem~6.32 for the Fr\'echet formulation]{Arens1958}: on a commutative unital Fr\'echet--Arens--Michael algebra, the values of any character on a finite tuple can be matched exactly by a continuous character. If the algebra has finitely many topological generators, interpolate on these generators together with an arbitrary additional element. The resulting continuous characters agree on the generators and hence everywhere, proving continuity of the original character. In general, however, the interpolating continuous character may depend on the chosen tuple.

For the general problem, Clayton~\cite{Clayton1975} introduced reductions to particular test algebras. One of these is an algebra of entire power series in countably many variables, described in coefficient form by Dales, Patel and Read~\cite[Definition~9.2]{DalesPatelRead2010}. A related geometric approach was developed by Dixon and Esterle~\cite[Theorem~3.3]{DixonEsterle1986}, who connected the problem with inverse sequences of complex Euclidean spaces and entire bonding maps. Esterle~\cite{Esterle1996} pursued this connection through Mittag-Leffler methods and joint spectra; his survey~\cite{Esterle2012} gives a fuller account of these reductions and their relation to other automatic continuity problems.

Affirmative solutions have also been proposed by Patel~\cite{Patel2022} and Laayouni~\cite[Theorem~2.6]{Laayouni2024}, but the proposed proofs appear to contain gaps. Our proof is independent of the arguments of Patel and Laayouni. It combines the test-algebra reduction with Arens's theorem and Oka approximation, as we now explain.

\subsection*{Idea of the proof and organisation}
The starting point is the coefficient algebra $\E$, defined in Section~\ref{sec:coefficient}, whose continuous characters are evaluations at bounded complex sequences. For $j\in\N$, we write $z_j\in\E$ for the $j$-th coordinate function, given by $z_j(w)=w_j$. If a character $\chi$ on a commutative unital Arens--Michael algebra were unbounded on some bounded set, we could choose a bounded sequence $(x_j)_{j=1}^{\infty}$ with $\chi(x_j)=4j$. Proposition~\ref{prop:reduction} shows that substitution along this sequence gives a character $\Phi$ on $\E$ satisfying $\Phi(z_j)=4j$ for every $j\in\N$. This reduction uses completeness of the original algebra but does not require metrizability.

For $k\in\N$, let $S_k$ be the set of bounded sequences whose first $k$ coordinates are $4,8,\ldots,4k$. Given any $f,g\in\E$, Arens's theorem applied to $z_1,\ldots,z_k,f,g$ would give a point $w\in S_k$ such that
\begin{equation*}
  (f(w),g(w))=(\Phi(f),\Phi(g)).
\end{equation*}
Thus the fixed vector on the right would belong to every $F(S_k)$, where $F=(f,g)$ maps each bounded complex sequence $w$ to $(f(w),g(w))\in\C^2$. It suffices, therefore, to construct one pair $f,g\in\E$ for which these images have empty intersection.

Proposition~\ref{prop:escape}, the central step of the proof, constructs precisely such a pair. Its proof in Section~\ref{sec:escape} uses successive approximation with entire maps $F_N\colon\C^N\to\C^2$ and establishes the stronger estimate
\begin{equation*}
  \norm{F(w)}_2\ge k\qquad(w\in S_k,\ k\in\N).
\end{equation*}
At each stage, we impose the required lower bounds on a finite nested family of affine subspaces. Lemma~\ref{lem:flag-approximation} expresses these conditions as a problem of approximating sections of a holomorphic submersion: the complements of closed balls in $\C^2$ are Oka by Kusakabe's theorem~\cite[Corollary~1.3]{Kusakabe2024}, so Forstneri\v c's theorem~\cite[Theorem~1.4(B)]{Forstneric2010} gives the required holomorphic sections.

We choose the approximation errors so that Cauchy's estimates give convergence of $(F_N)$ in $\E^2$. The limit $F=(f,g)$ inherits all the lower bounds. Section~\ref{sec:completion} combines this construction with finite interpolation to exclude $\Phi$ and proves Theorem~\ref{thm:boundedness} by unitisation. Automatic continuity and the seminorm estimate in Corollary~\ref{cor:continuity} then follow.

\section{The coefficient algebra and the universal reduction}\label{sec:coefficient}

Throughout the paper, all algebras and linear maps are over $\C$, and characters are not assumed continuous or bounded unless this is stated. A subset of a locally convex space is bounded if every continuous seminorm is bounded on it. We work with unital algebras until the final section; every character on such an algebra takes the identity to $1$. We write $\N=\{1,2,\ldots\}$ and $\N_0=\{0,1,2,\ldots\}$.

We begin by describing the coefficient algebra used in the reduction. Let $z=(z_j)_{j=1}^{\infty}$ be a sequence of commuting indeterminates, and let $\M=\N_0^{(\N)}$ be the countable additive monoid of finitely supported sequences of nonnegative integers. For $\alpha=(\alpha_j)_{j=1}^{\infty}\in\M$, write
\begin{equation*}
  \abs{\alpha}=\sum_{j=1}^{\infty}\alpha_j,\qquad z^\alpha=\prod_{j=1}^{\infty}z_j^{\alpha_j}.
\end{equation*}
We consider formal power series $f=\sum_{\alpha\in\M}c_\alpha z^\alpha$ in the commuting variables $(z_j)_{j=1}^{\infty}$. The coefficient algebra $\E$ is the subalgebra consisting of those series whose coefficients satisfy the following weighted absolute summability conditions:
\begin{equation*}
  \E=\left\{\sum_{\alpha\in\M}c_\alpha z^\alpha: q_r(f)=\sum_{\alpha\in\M}\abs{c_\alpha}r^{\abs{\alpha}}<\infty \text{ for every }r\in\N\right\}.
\end{equation*}
Multiplication is the usual product of formal power series, given by coefficient convolution. With this product, $\E$ is the algebra $\mathcal U$ of~\cite[Definition~9.2]{DalesPatelRead2010}. Absolute summation gives
\begin{equation*}
  q_r(fg)\le q_r(f)q_r(g)\qquad(f,g\in\E).
\end{equation*}
The norms $(q_r)_{r=1}^{\infty}$ are increasing, and a sequence that is Cauchy in all of them has a limit in each weighted $\ell^1$ coefficient space. These limits have the same coefficients, so $\E$ is complete and hence a commutative unital Fr\'echet--Arens--Michael algebra. Finite sums of monomials are dense in $\E$, since truncation approximates any element in each specified $q_r$ and the norms are increasing.

To describe the continuous characters on $\E$, let $\ell^\infty$ denote the space of bounded complex sequences with its supremum norm. For $w=(w_j)_{j=1}^{\infty}\in\ell^\infty$ and $f\in\E$, the coefficient series defining $f(w)$ converges absolutely, and for every $R\in\N$ with $R\ge\max\{1,\norm{w}_\infty\}$ we have
\begin{equation}\label{eq:evaluation-bound}
  \abs{f(w)}\le q_R(f).
\end{equation}

The following description of the continuous characters appears in Dixon and Esterle~\cite[Remark~2.2]{DixonEsterle1986} and is also noted in~\cite[p.~138]{DalesPatelRead2010}. We include the short proof for completeness.

\Needspace{7\baselineskip}
\begin{lemma}\label{lem:characters}
The continuous characters on $\E$ are exactly the evaluations at points of $\ell^\infty$.
\end{lemma}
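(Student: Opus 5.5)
Two inclusions need proof, one for each direction of the statement.

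First, evaluations are continuous characters. Fix $w\in\ell^\infty$ and set $\delta_w(f)=f(w)$. The coefficient series converges absolutely, so $\delta_w$ is linear. It is also multiplicative: coefficient convolution corresponds to the product of absolutely convergent series, and absolutely convergent double series may be rearranged freely. Moreover $\delta_w(1)=1$, so $\delta_w$ is nonzero. Continuity is exactly \eqref{eq:evaluation-bound}, which gives $\abs{\delta_w(f)}\le q_R(f)$ for any integer $R\ge\max\{1,\norm{w}_\infty\}$.

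Conversely, let $\chi$ be a continuous character and set $w_j=\chi(z_j)$.
\begin{itemize}
\item[] Continuity gives some $r\in\N$ and $C>0$ with $\abs{\chi(f)}\le C\,q_r(f)$ for all $f\in\E$, since the $q_r$ are increasing.
\item[] Apply this to $f=z_j^n$, for which $q_r(z_j^n)=r^n$. Multiplicativity then gives $\abs{w_j}^n\le C r^n$ for every $n$.
\item[] Taking $n$-th roots and letting $n\to\infty$ yields $\abs{w_j}\le r$ for all $j$. Hence $w=(w_j)\in\ell^\infty$ with $\norm{w}_\infty\le r$.
\end{itemize}
The $n$-th root trick is the one genuinely non-formal step. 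It removes the constant $C$, which a single application of the bound would leave in place.

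It remains to show $\chi=\delta_w$. Multiplicativity and $\chi(1)=1$ (the paper notes that characters on unital algebras send the identity to $1$) give $\chi(z^\alpha)=w^\alpha=\delta_w(z^\alpha)$ for every $\alpha\in\M$. By linearity, $\chi$ and $\delta_w$ then agree on all finite sums of monomials. Both functionals are continuous: $\chi$ by hypothesis, and $\delta_w$ by the first part. Finite sums of monomials are dense in $\E$, as noted above. Two continuous functionals agreeing on a dense subspace coincide, so $\chi=\delta_w$.

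I expect no real obstacle here. The only points needing care are the root trick above and the routine justification of multiplicativity of $\delta_w$ via absolute convergence.
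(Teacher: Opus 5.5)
Your proof is correct and follows the paper's argument: you bound $w_j=\chi(z_j)$ via the continuity estimate, match $\chi$ with evaluation on finite polynomials, and extend by density, with continuity of evaluation coming from \eqref{eq:evaluation-bound}. The differences are cosmetic. The $n$-th root trick is not actually needed, since $\abs{w_j}\le Cq_r(z_j)=Cr$ already gives $w\in\ell^\infty$. You also check multiplicativity of evaluation by rearranging absolutely convergent double series, where the paper uses density of polynomials and continuity of multiplication.
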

\begin{proof}
If $\eta$ is a continuous character, there are $C>0$ and $r\in\N$ such that $\abs{\eta(f)}\le Cq_r(f)$ for all $f\in\E$. The sequence $w_j=\eta(z_j)$ is bounded, because $q_r(z_j)=r$ for every $j$. On finite polynomials, $\eta$ agrees with evaluation at $w$. Equation~\eqref{eq:evaluation-bound} and density extend this equality to $\E$.

Conversely, evaluation at a bounded sequence is continuous by~\eqref{eq:evaluation-bound}. It takes the identity to $1$ and is multiplicative on finite polynomials, so density and continuity of multiplication show that it is a character on $\E$.
\end{proof}

We next transfer a hypothetical unbounded character to $\E$, with prescribed values on the coordinate functions. The argument is the usual test-algebra substitution, applied to a bounded sequence rather than to a sequence chosen from a countable neighbourhood basis; compare~\cite[Theorem~9.3]{DalesPatelRead2010}.

\begin{proposition}\label{prop:reduction}
If a commutative unital Arens--Michael algebra has a character that is unbounded on a bounded set, then $\E$ has a character $\Phi$ satisfying
\begin{equation*}
  \Phi(z_j)=4j\qquad(j\in\N).
\end{equation*}
\end{proposition}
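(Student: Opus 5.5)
Let $A$ be a commutative unital Arens--Michael algebra and $\chi$ a character on $A$ that is unbounded on some bounded set $B\subseteq A$. The plan is to build a continuous unital homomorphism $\theta\colon\E\to A$ with $\theta(z_j)=x_j$ for suitable $x_j\in A$, and then to set $\Phi=\chi\circ\theta$.

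We first choose the sequence $(x_j)$. Since $\chi$ is unbounded on $B$, for each $j\in\N$ we pick $b_j\in B$ with $\abs{\chi(b_j)}\ge 4j$, and put
\begin{equation*}
  x_j=\frac{4j}{\chi(b_j)}\,b_j .
\end{equation*}
Then $\chi(x_j)=4j$. Each scalar $4j/\chi(b_j)$ has modulus at most $1$, so every continuous seminorm satisfies $p(x_j)\le\sup_{b\in B}p(b)<\infty$. Hence $\{x_j\}$ is bounded in $A$. We may also enlarge $B$ to its balanced hull, which is still bounded.

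Next we define $\theta$ by
\begin{equation*}
  \theta(f)=\sum_{\alpha\in\M}c_\alpha x^\alpha ,
\end{equation*}
where $x^\alpha=\prod_j x_j^{\alpha_j}$ and $x^0=1$. Convergence is the main point, and it is where completeness and submultiplicativity enter. Fix a continuous submultiplicative seminorm $p$ from a defining family, and take $r\in\N$ with $r\ge\max\{1,\sup_j p(x_j)\}$. Submultiplicativity gives $p(x^\alpha)\le r^{\abs{\alpha}}$ for $\alpha\neq0$, and for $\alpha=0$ we have $p(1)\le1$ or $p(1)=0$. Hence
\begin{equation*}
  \sum_{\alpha\in\M}\abs{c_\alpha}\,p(x^\alpha)\le q_r(f).
\end{equation*}
The finite partial sums, taken along any exhaustion of the countable set $\M$, therefore form a Cauchy net for every $p$. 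Completeness and the Hausdorff property of $A$ give a unique limit, independent of the exhaustion by absolute convergence. Metrizability is not needed, because only absolute summability in each seminorm is used.

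It remains to check the algebraic properties of $\theta$:
\begin{enumerate}
  \item $\theta$ is linear and $\theta(1)=1$.
  \item $\theta$ is multiplicative. This holds on finite polynomials, since $A$ is commutative and so $x^\alpha x^\beta=x^{\alpha+\beta}$. It extends to all of $\E$ because the estimate above gives $p(\theta(f))\le q_r(f)$, so $\theta$ is continuous. Since polynomials are dense in $\E$ and multiplication in both algebras is jointly continuous, multiplicativity passes to the limit. Alternatively, one can rearrange the absolutely convergent Cauchy product directly.
\end{enumerate}

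Finally, $\Phi=\chi\circ\theta$ is multiplicative and linear, and it satisfies $\Phi(1)=\chi(1)=1$, so it is nonzero and hence a character. Moreover $\Phi(z_j)=\chi(x_j)=4j$ for every $j\in\N$, as required. Note that we never need $\chi$ to be continuous, since only its algebraic properties are used.

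The only delicate step is the convergence of $\theta(f)$ in a merely complete, possibly nonmetrizable, algebra. The uniform bound $r$ per seminorm, obtained from the boundedness of $\{x_j\}$, handles this.
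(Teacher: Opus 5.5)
This is essentially the paper's proof: the same rescaled bounded sequence $x_j=\frac{4j}{\chi(b_j)}b_j$, the same continuous substitution homomorphism $\E\to A$ obtained from $p(x^\alpha)\le r^{\abs{\alpha}}$ and completeness, and $\Phi=\chi\circ\theta$. Summing the series directly instead of extending from polynomials by density makes no difference. One small correction: submultiplicativity gives $p(1)=p(1\cdot1)\le p(1)^2$, so $p(1)=0$ or $p(1)\ge1$, not $p(1)\le1$, and $p(1)>1$ can occur. Your estimate should therefore read $\sum_{\alpha}\abs{c_\alpha}p(x^\alpha)\le\max\{1,p(1)\}\,q_r(f)$, which is exactly the factor the paper includes and does not affect convergence or continuity.
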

\begin{proof}
Let $B\subset A$ be bounded and suppose that $\chi(B)$ is unbounded. Choose $b_n\in B$ with $\abs{\chi(b_n)}\ge4n$ for every $n\in\N$, and put
\begin{equation*}
  x_n=\frac{4n}{\chi(b_n)}b_n,\qquad \chi(x_n)=4n.
\end{equation*}
The scalar factors have modulus at most $1$, so $(x_n)_{n=1}^{\infty}$ is bounded in $A$. For each continuous submultiplicative seminorm $p$ on $A$, put
\begin{equation*}
  M_p=\max\left\{1,\sup_{n\in\N}p(x_n)\right\},\qquad R_p=\lceil M_p\rceil.
\end{equation*}
Since $A$ is commutative, the substitution $z_n\mapsto x_n$ defines a unital algebra homomorphism on finite polynomials. For a polynomial $f=\sum_\alpha c_\alpha z^\alpha$, submultiplicativity gives
\begin{equation*}
  p\left(\sum_\alpha c_\alpha x^\alpha\right) \le\max\{1,p(1)\}\,q_{R_p}(f).
\end{equation*}
Here the factor $\max\{1,p(1)\}$ accounts for the constant monomial. Now, for an arbitrary $f\in\E$, approximate it by a sequence of finite polynomials in all the norms $q_r$. Their images form a Cauchy sequence in every defining seminorm of $A$, so completeness gives a limit independent of the approximating sequence. The estimate above thus extends substitution to a continuous linear map $T\colon\E\to A$. Continuity of multiplication shows that $T$ remains a unital algebra homomorphism. Taking $\Phi=\chi\circ T$ therefore proves the proposition.
\end{proof}

\section{Holomorphic escape along affine flags}\label{sec:escape}

Motivated by the coordinate values in Proposition~\ref{prop:reduction}, we define
\begin{equation*}
  S_k=\{w\in\ell^\infty:w_j=4j\text{ for }1\le j\le k\} \qquad(k\in\N).
\end{equation*}
As in the introduction, for $f,g\in\E$ we write $F=(f,g)$ for the map from $\ell^\infty$ to $\C^2$ that sends each bounded sequence $w$ to $(f(w),g(w))$. Although the sets $S_k$ are nonempty and have empty intersection, this alone does not contradict finite interpolation. We need to choose $f$ and $g$ so that the images $F(S_k)$ also have empty intersection. The next proposition is the central step of the proof: it constructs such a pair by making $\norm{F(w)}_2$ uniformly large on each $S_k$.

\begin{proposition}\label{prop:escape}
There exist $f,g\in\E$ such that $F=(f,g)$ satisfies
\begin{equation*}
  \norm{F(w)}_2\ge k\qquad(w\in S_k,\ k\in\N).
\end{equation*}
In particular, $\bigcap_{k=1}^{\infty}F(S_k)=\varnothing$.
\end{proposition}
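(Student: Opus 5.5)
We build $F$ as a limit of entire maps $F_N\colon\C^N\to\C^2$, each depending only on the first $N$ coordinates, with $F_N$ regarded as an element of $\E^2$ through its Taylor series. The obstruction to overcome is that $S_k$ is infinite dimensional. We handle this by imposing the lower bounds on finite-dimensional slices and letting later corrections be small. For $N\ge k$, write
\begin{equation*}
  L_{k,N}=\{u\in\C^N:u_j=4j\ (1\le j\le k)\}.
\end{equation*}
For fixed $N$, the sets $L_{1,N}\supset L_{2,N}\supset\cdots\supset L_{N,N}$ form a nested family of affine subspaces of $\C^N$; we call this the flag at stage $N$. At stage $N$ we want $F_N$ to satisfy, for each $k\le N$,
\begin{equation*}
  \norm{F_N(u)}_2>k+1\qquad\text{for } u\in L_{k,N},\ \norm{u}_\infty\le R_N,
\end{equation*}
where $R_N\uparrow\infty$. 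The margin $+1$ absorbs the tail of the later corrections.

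\textbf{Inductive step.} Given $F_{N-1}$, we regard it as a function on $\C^N$ that is constant in $u_N$. It already satisfies the bounds for $k\le N-1$ on the compact pieces of $L_{k,N}$ with $\norm{u}_\infty\le R_{N-1}$. We must do two things.
\begin{enumerate}
\item Extend the lower bounds to the larger polydisc of radius $R_N$ on each $L_{k,N}$, $k\le N$. This includes the new slice $L_{N,N}$, which is a single point, where we need the bound $N+1$.
\item Stay uniformly $\varepsilon_N$-close to $F_{N-1}$ on the polydisc of radius $R_{N-1}$.
\end{enumerate}

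This is where Lemma~\ref{lem:flag-approximation} and Oka theory enter. A map with $\norm{F}_2>c$ on an affine subspace is a holomorphic map from that subspace into $\C^2\setminus\overline B(0,c)$, and this target is Oka by Kusakabe's theorem. The varying radii $c=k+1$ on the nested subspaces can be packaged as sections of a holomorphic submersion over a suitable Stein space built from the flag. One way is to work inductively from the smallest subspace outward, extending from $L_{k+1,N}$ to $L_{k,N}$ by interpolation on the closed submanifold. Forstneri\v c's theorem, which gives the Oka property with approximation on holomorphically convex compacts and interpolation on closed subvarieties, then yields a section. This section approximates $F_{N-1}$ on the polydisc, which is polynomially convex, and has the prescribed values on the larger sets. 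Extending the result from $\bigcup_k L_{k,N}$ to all of $\C^N$ is done by the same approximation, since the union is a closed subvariety.

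\textbf{Convergence.} Choose $\varepsilon_N$ so small that Cauchy's estimates on polydiscs of radius $R_N$ give
\begin{equation*}
  q_r(F_N-F_{N-1})\le 2^{-N}\qquad\text{for } r\le R_{N-1}/2,
\end{equation*}
with $R_N$ growing fast enough that the weighted $\ell^1$ coefficient norms $q_r$ are controlled by supremum norms on a slightly larger polydisc. The extra dimensions are handled by the geometric factor in the Cauchy estimate, which is why we take $R_{N-1}\ge 2r$ in each variable and summed errors. Then $F_N\to F$ in $\E^2$. For $w\in S_k$, truncating $w$ to its first $N$ coordinates gives a point of $L_{k,N}$ with $\norm{\cdot}_\infty\le\norm{w}_\infty\le R_N$ for large $N$, at which $\norm{F_N}_2>k+1$. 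Moreover, $F_N(w)\to F(w)$, and the tail of corrections is at most $1$. Hence $\norm{F(w)}_2\ge k$.

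\textbf{Main obstacle.} I expect the inductive step to be the hardest part. The simultaneous lower bounds on the nested flag must be realised as a genuine Oka section problem with approximation. The passage from $\ell^1$-type $q_r$ control to supremum estimates in unboundedly many variables also needs care in the choice of $\varepsilon_N$ and $R_N$.
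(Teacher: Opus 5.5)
\textbf{Verdict: there is a genuine gap, in the inductive step.} Your outer framework is the paper's, and the outer steps are fine: entire maps $F_N$ on $\C^N$ with lower bounds along the affine flag, approximation on polydiscs, Cauchy estimates giving convergence in $\E^2$, and the limit taken through continuous evaluation at $w$. Restricting the bounds to bounded free coordinates is harmless there, since each $w\in\ell^\infty$ is bounded.

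The inductive step, however, is the real content of the proposition. Forstneri\v c's theorem, like Lemma~\ref{lem:flag-approximation}, only deforms a \emph{given} continuous section that already satisfies every constraint and is holomorphic near the compact set. You never produce such a section, and trying to do so exposes a compatibility condition you have missed. At stage $N$ you demand $\norm{F_N(\xi_N)}_2>N+1$ at $\xi_N=(4,8,\ldots,4N)$. The induction only gives $\norm{F_{N-1}(\xi_N)}_2=\norm{F_{N-1}(\xi_{N-1})}_2>N$, and $F_N$ must be $\varepsilon_N$-close to $F_{N-1}$ on the polydisc of radius $R_{N-1}$. If $R_{N-1}\ge4N$, then $\xi_N$ lies in that polydisc and the step can fail. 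If this holds for all $N\ge N_0$, as your ``$R_N$ growing fast enough'' invites, chaining the approximations gives $\norm{F_N(\xi_N)}_2\le\norm{F_{N_0-1}(\xi_{N_0-1})}_2+\sum_j\varepsilon_j$. That quantity is bounded, so the scheme is inconsistent. The radii must stay below $4N$; the paper approximates on $K_N$ of radius $2N$. The new bound must then be created by hand away from the approximation set. The paper sets $G_N=(1+b_N)H_N$, with $b_N$ a continuous bump equal to $1$ at $\xi_N$ and supported off $K_N$. This doubles the norm at $\xi_N$, keeps every older bound, and is unchanged near $K_N$, so it is an admissible input to the lemma.

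The Oka problem itself is also never set up. ``A suitable Stein space built from the flag'' and ``prescribed values on the larger sets'' identify neither a submersion nor a starting section. Your compact-piece hypothesis also does not fit Lemma~\ref{lem:flag-approximation}, which needs strict bounds on the \emph{whole} affine subspaces; nothing stops $F_{N-1}$ from vanishing on $L_{k,N}$ outside your pieces. The paper keeps the bounds on the full subspaces at every stage. It encodes them as sections of the projection from $Z=(\C^n\times\C^2)\setminus\bigcup_j\bigl(L_j\times\overline B(r_j)\bigr)$, whose fibre over $L_j\setminus L_{j+1}$ is the single ball complement $\C^2\setminus\overline B(r_j)$ because $r_1\le\cdots\le r_m$. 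This is exactly where Kusakabe's theorem and Forstneri\v c's stratified Oka principle apply.

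Your alternative, passing from $L_{k+1,N}$ to $L_{k,N}$ by the Oka principle with interpolation for maps into $\C^2\setminus\overline B(k+1)$, could replace that lemma. It would need three things you have not supplied: maps defined on all of each $L_{k,N}$; an explicit continuous initial map agreeing with the interpolation data and holomorphic near the polydisc; and the same requirement that $\xi_N$ lie outside the approximation polydisc. Note also that $\bigcup_kL_{k,N}=L_{1,N}$, so your final extension is just extension off one hyperplane.
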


The proof proceeds by constructing entire maps in finitely many variables and then taking a limit in $\E^2$. Before carrying out these two steps, we establish the approximation lemma needed for the construction.

\subsection*{Approximation with bounds on affine subspaces}
The approximation step uses Oka theory. A complex manifold is called \emph{Oka} if it has the convex approximation property: for every $m\in\N$, holomorphic maps from a neighbourhood of a compact convex set in $\C^m$ can be approximated on that set by entire maps into the manifold. The following lemma combines Kusakabe's result~\cite[Corollary~1.3]{Kusakabe2024} with Forstneri\v c's theorem~\cite[Theorem~1.4(B)]{Forstneric2010} to obtain approximation subject to lower bounds along nested affine subspaces.

\begin{lemma}\label{lem:flag-approximation}
Let $L_1\supsetneq\ldots\supsetneq L_m$ be nonempty proper affine subspaces of $\C^n$, let $0<r_1\le\ldots\le r_m$, and let $K\subset\C^n$ be compact and polynomially convex. Suppose that $h\colon\C^n\to\C^2$ is continuous, holomorphic on a neighbourhood of $K$, and satisfies
\begin{equation*}
  \norm{h(z)}_2>r_j\qquad(z\in L_j,\ 1\le j\le m).
\end{equation*}
Then, for every $\varepsilon>0$, there is an entire map $H\colon\C^n\to\C^2$ with
\begin{equation*}
  \sup_{z\in K}\norm{H(z)-h(z)}_2<\varepsilon, \qquad \norm{H(z)}_2>r_j\quad(z\in L_j,\ 1\le j\le m).
\end{equation*}
\end{lemma}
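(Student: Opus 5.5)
We will realise the lemma as a section-approximation problem for a holomorphic submersion and then invoke Forstneri\v c's theorem~\cite[Theorem~1.4(B)]{Forstneric2010}. First choose affine coordinates adapted to the flag. After an affine change of variables on $\C^n$ (which preserves polynomial convexity and entire maps), we may assume $L_j=\{z: z_1=\dots=z_{d_j}=0\}$ with $1\le d_1<d_2<\dots<d_m\le n$. Define a closed subset $Z\subset\C^n\times\C^2$ fibred over $\C^n$ by
\begin{equation*}
  Z_z=\{v\in\C^2:\norm{v}_2\le \rho(z)\},
\end{equation*}
where $\rho(z)=\max\{r_j: z\in L_j\}$ and $\rho(z)=0$ off $L_1$. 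Since the $L_j$ are nested and $r_j$ increases, $\rho(z)=r_j$ exactly on $L_j\setminus L_{j+1}$. The required condition on $H$ is that its graph lies in $X=(\C^n\times\C^2)\setminus Z$, which is open because $\rho$ is upper semicontinuous. However, $X\to\C^n$ is not a locally trivial submersion, so Forstneri\v c's theorem does not apply directly.

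To repair this, stratify. The stratum $L_j\setminus L_{j+1}$ is a locally closed complex submanifold, and over it the fibre is the constant Oka manifold $\C^2\setminus\overline{B}(0,r_j)$ by Kusakabe~\cite[Corollary~1.3]{Kusakabe2024}. Off $L_1$ the fibre is $\C^2$. So $X\to\C^n$ is a stratified holomorphic fibre bundle with Oka fibres, and the stratified version of the Oka principle gives the approximation. This is Theorem~1.4(B) in the form covering stratified subvarieties, namely sections of stratified fibre bundles with Oka fibres over a Stein space.

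There is an alternative if one prefers to stay with genuine submersions. Proceed by induction from the smallest subspace $L_m$ outward. On $L_m\cong\C^{n-d_m}$, the set $K\cap L_m$ is polynomially convex, and $h|_{L_m}$ maps into the Oka manifold $\C^2\setminus\overline{B}(0,r_m)$. Since $L_m$ is contractible, $h|_{L_m}$ is homotopic through continuous maps to a constant, so the Oka property with approximation gives an entire $H_m$ on $L_m$ close to $h$ on $K\cap L_m$ with values outside the ball. Next, extend to $L_{m-1}$ with interpolation on $L_m$. The extension must be holomorphic near $K$, agree with $H_m$ on $L_m$, approximate $h$, and avoid the ball of radius $r_{m-1}$ on $L_{m-1}$. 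For this we use the Oka principle with approximation on $K\cap L_{m-1}$ and interpolation on the closed subvariety $L_m\subset L_{m-1}$; Forstneri\v c's theorem includes this jet-interpolation clause. Continuing up to $L_1$ and finally to $\C^n$ (where there is no constraint, so the last step is plain Oka--Weil approximation with interpolation on $L_1$ by Cartan extension) yields $H$. Openness of the strict inequality conditions is not an issue because interpolation is exact on the smaller subspaces.

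The main obstacle is the first step of each stage: producing a continuous initial section that is holomorphic near $K\cup L_{j+1}$, extends the previous stage, and avoids the ball on $L_j$. We would build it by gluing $h$ near $K$ with the already constructed map near $L_{j+1}$. The two agree approximately, not exactly, on overlaps, so we would first correct $h$ by a small holomorphic perturbation vanishing to order zero on $L_{j+1}$, using Cartan's Theorem~B on the Stein manifold $L_j$. Smallness of the perturbation preserves the strict inequality $\norm{h}_2>r_j$ on the compact part $K\cap L_j$. Away from $K$, any continuous map into the fibre suffices, and such maps exist by contractibility, since the fibre is connected and $\C^2\setminus\overline{B}$ is path-connected. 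We also need to ensure $\varepsilon$ is small relative to $\min_{K\cap L_j}(\norm{h}_2-r_j)$ at each stage.
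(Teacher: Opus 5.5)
Your primary route is the paper's proof. You take the open set of pairs $(z,v)$ with $\norm{v}_2>r_j$ over $L_j$, stratify it by $L_j\setminus L_{j+1}$ with trivial fibres $\C^2$ and $\C^2\setminus\overline B(r_j)$ (Oka by Kusakabe), and apply Forstneri\v c's Theorem~1.4(B) to the graph of $h$. There are two small slips: with $\rho=0$ off $L_1$ your removed set there is $\{0\}$, so the fibre is $\C^2\setminus\{0\}$ and the graph of $h$ need not lie in $X$ (take $Z_z=\varnothing$ off $L_1$); and in the optional inductive route $\C^2\setminus\overline B$ is homotopy equivalent to $S^3$, not contractible, so the initial map away from $K$ should come from the given global $h$ by homotopy extension, not from path-connectedness of the fibre.
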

\begin{proof}
Write $\overline B(r)=\{v\in\C^2:\norm{v}_2\le r\}$, and set
\begin{equation*}
  Z=(\C^n\times\C^2)\setminus\bigcup_{j=1}^m \bigl(L_j\times\overline B(r_j)\bigr).
\end{equation*}
Let $\pi\colon Z\to\C^n$ be the projection onto the first factor, given by $\pi(z,v)=z$. The condition $(z,v)\in Z$ means that $\norm{v}_2>r_j$ whenever $z\in L_j$. Each set $L_j\times\overline B(r_j)$ is closed, so their finite union is closed and $Z$ is open in $\C^{n+2}$. The restriction of the coordinate projection to this open set is a holomorphic submersion.

To describe the bundle structure, set $L_0=\C^n$ and $L_{m+1}=\varnothing$. The chain $(L_j)_{j=0}^{m+1}$ is a finite filtration by closed complex subvarieties, with strata $L_j\setminus L_{j+1}$ for $0\le j\le m$. Each stratum is a smooth complex manifold, since it is an open subset of the affine subspace $L_j$.

If $z\in L_0\setminus L_1$, then $z$ belongs to none of the subspaces $L_1,\ldots,L_m$, so the definition of $Z$ places no restriction on $v$. Hence $\pi^{-1}(L_0\setminus L_1)=(L_0\setminus L_1)\times\C^2$. Now fix $1\le j\le m$ and $z\in L_j\setminus L_{j+1}$. The point $z$ belongs precisely to $L_1,\ldots,L_j$. Thus the part removed from the fibre above $z$ is $\bigcup_{i=1}^{j}\overline B(r_i)=\overline B(r_j)$, where the equality follows from $r_1\le\ldots\le r_j$. Consequently,
\begin{equation*}
  \pi^{-1}(L_j\setminus L_{j+1})=(L_j\setminus L_{j+1})\times\bigl(\C^2\setminus\overline B(r_j)\bigr)\qquad(1\le j\le m).
\end{equation*}
In both cases, $\pi$ is the projection onto the first factor. Its restriction over each stratum is therefore a trivial holomorphic fibre bundle.

Since closed Euclidean balls are polynomially convex, their complements in $\C^2$ are Oka by Kusakabe's theorem~\cite[Corollary~1.3]{Kusakabe2024}; the fibre $\C^2$ is also Oka. Thus the restrictions of $\pi$ to the strata satisfy the bundle hypothesis of~\cite[Definitions~1.2--1.3]{Forstneric2010}. This hypothesis requires local triviality only over each stratum, and imposes no Stein condition on the total space. Choosing $a\in L_m$, the open balls $U_\nu=B(a,\nu)$ exhaust $\C^n$, and intersecting the affine filtration with each $U_\nu$ gives the stratification required by Forstneri\v c's theorem.

The graph of $h$ is a continuous section of $\pi$, holomorphic near $K$. Polynomial convexity implies that $K$ is $\mathcal O(\C^n)$-convex, where $\mathcal O(\C^n)$ denotes the algebra of entire functions on $\C^n$. We may therefore apply Forstneri\v c's theorem~\cite[Theorem~1.4(B)]{Forstneric2010}, with empty interpolation subvariety, to approximate it on $K$ by a holomorphic section. Using the Euclidean distance inherited from $\C^{n+2}$ gives the required estimate for the fibre component $H$, while the condition that its graph lies in $Z$ gives all the lower bounds.
\end{proof}

We now use Lemma~\ref{lem:flag-approximation} to prove Proposition~\ref{prop:escape}.

\begin{proof}[Proof of Proposition~\ref{prop:escape}]
We divide the argument into two steps.

\smallskip
\noindent\emph{Step 1: Construction in finitely many variables.} We construct entire maps $F_N\colon\C^N\to\C^2$, $N\in\N$. When comparing successive maps, we regard $F_{N-1}$ as a map on $\C^N$ independent of its last coordinate. The maps will satisfy
\begin{equation}\label{eq:finite-escape}
  \norm{F_N(z)}_2>k \quad\text{if }z_j=4j\ (1\le j\le k), \qquad 1\le k\le N,
\end{equation}
with no restrictions on the remaining coordinates. To ensure convergence in $\E^2$, we also require, for $N\ge2$,
\begin{equation}\label{eq:approximation}
  \sup_{z\in K_N}\norm{F_N(z)-F_{N-1}(z)}_2<2^{-3N}, \qquad K_N=\{z\in\C^N:\abs{z_j}\le2N\ (1\le j\le N)\}.
\end{equation}

Set $F_1$ to be the constant function with value $(2,0)$, which satisfies~\eqref{eq:finite-escape}. Suppose that $N\ge2$ and $F_{N-1}$ has been constructed. For $1\le k\le N$, put
\begin{equation*}
  L_{N,k}=\{z\in\C^N:z_j=4j\ (1\le j\le k)\}.
\end{equation*}
These subspaces form a strictly decreasing affine flag. To apply Lemma~\ref{lem:flag-approximation} with $L_k=L_{N,k}$ and $r_k=k$, we first modify the preceding map near the new constrained point.

Write
\begin{equation*}
  H_N(z)=F_{N-1}(z_1,\ldots,z_{N-1}),\qquad \xi_N=(4,8,\ldots,4N).
\end{equation*}
The bounds in~\eqref{eq:finite-escape} with $k<N$ already hold for $H_N$, and in particular $\norm{H_N(\xi_N)}_2>N-1$. It remains to obtain the bound at $\xi_N$ while keeping the map unchanged near $K_N$.

Since $4N>2N$, the point $\xi_N$ lies outside $K_N$. Choose a continuous function $b_N\colon\C^N\to[0,1]$ with $b_N(\xi_N)=1$ and compact support in a neighbourhood of $\xi_N$ disjoint from $K_N$. Set
\begin{equation*}
  G_N(z)=(1+b_N(z))H_N(z).
\end{equation*}
The real factor $1+b_N(z)\ge1$ preserves all the bounds for $k<N$, while at the new constrained point we have
\begin{equation*}
  \norm{G_N(\xi_N)}_2=2\norm{H_N(\xi_N)}_2>2(N-1)\ge N.
\end{equation*}
Since $L_{N,N}=\{\xi_N\}$, the map $G_N$ satisfies all the inequalities required in Lemma~\ref{lem:flag-approximation}.

Moreover, $G_N$ agrees with the entire map $H_N$ near the polynomially convex polydisc $K_N$. Lemma~\ref{lem:flag-approximation}, applied with $\varepsilon=2^{-3N}$, therefore gives an entire map $F_N$ satisfying both~\eqref{eq:finite-escape} and~\eqref{eq:approximation}, completing the induction.

\smallskip
\noindent\emph{Step 2: Convergence in the coefficient algebra.} We next pass from the maps $F_N$ to a pair in $\E^2$. Every entire function of finitely many variables represents an element of $\E$: for an entire function $h$ on $\C^N$, Cauchy's estimate on a polydisc of radius $2r$ gives
\begin{equation*}
  q_r(h)\le 2^N\sup_{\abs{z_j}\le2r}\abs{h(z)}<\infty.
\end{equation*}
We may therefore regard both components of each $F_N$ as elements of $\E$ and estimate successive differences in its defining norms.

For $N\ge2$, let $h_N$ be either component of $F_N-F_{N-1}$, and write its Taylor series as $\sum_{\alpha\in\N_0^N}c_{N,\alpha}z^\alpha$. By~\eqref{eq:approximation} and Cauchy's estimate,
\begin{equation*}
  \abs{c_{N,\alpha}}\le2^{-3N}(2N)^{-\abs{\alpha}}.
\end{equation*}
Consequently,
\begin{equation*}
  q_N(h_N)\le2^{-3N}\sum_{\alpha\in\N_0^N}2^{-\abs{\alpha}} =2^{-3N}(1-1/2)^{-N}=2^{-2N}.
\end{equation*}
For every fixed $r$, the same bound holds for $q_r(h_N)$ whenever $N\ge\max\{2,r\}$. By completeness, the series
\begin{equation*}
  F=F_1+\sum_{N=2}^{\infty}(F_N-F_{N-1})
\end{equation*}
converges in $\E^2$. Write $F=(f,g)$; then $F_N\to F$ in $\E^2$.

It remains to check the lower bounds for $F$. Fix $k\in\N$ and $w\in S_k$. Evaluation at $w$ is continuous by~\eqref{eq:evaluation-bound}, so $F_N(w)\to F(w)$, while~\eqref{eq:finite-escape} gives $\norm{F_N(w)}_2>k$ for every $N\ge k$. Passing to the limit yields $\norm{F(w)}_2\ge k$, as required. This passage to the limit uses only continuous evaluations; no discontinuous character is applied to an infinite sum.
\end{proof}

\begin{remark}\label{rem:two-components}
The use of two components in Proposition~\ref{prop:escape} is necessary. Indeed, if a scalar $f\in\E$ satisfied $\abs{f(w)}\ge1$ on $S_1$, its restriction to each affine complex line in $S_1$ would be entire, by~\eqref{eq:evaluation-bound}, and would have bounded reciprocal. Liouville's theorem would then make $f$ constant on $S_1$, since any two points of $S_1$ lie on such a line. A scalar function therefore cannot satisfy the increasing lower bounds on the sets $S_k$.
\end{remark}

\section{Boundedness and automatic continuity}\label{sec:completion}

We can now combine the reduction of Section~\ref{sec:coefficient} with Proposition~\ref{prop:escape} and Arens's finite joint-spectrum theorem.

\begin{proof}[Proof of Theorem~\ref{thm:boundedness}]
Suppose first that $A$ is unital and has a character that is unbounded on some bounded subset. Proposition~\ref{prop:reduction} then gives a character $\Phi$ on $\E$ with $\Phi(z_j)=4j$ for every $j\in\N$. Choose $f,g$ as in Proposition~\ref{prop:escape} and set
\begin{equation*}
  v=(\Phi(f),\Phi(g))\in\C^2.
\end{equation*}
For every $k\in\N$, the finitely generated ideal
\begin{equation*}
  I_k=\sum_{j=1}^k\E(z_j-4j)+\E(f-\Phi(f))+\E(g-\Phi(g))
\end{equation*}
is proper, since it is contained in $\ker\Phi$. Arens's finite joint-spectrum theorem~\cite[Theorem~6.3]{Arens1958} implies that $I_k$ is annihilated by a continuous character $\eta_k$; see also~\cite[Section~6, p.~577]{Esterle1996}. By Lemma~\ref{lem:characters}, $\eta_k$ is evaluation at some $w^{(k)}\in\ell^\infty$. Thus $w^{(k)}\in S_k$ and
\begin{equation*}
  v=(f(w^{(k)}),g(w^{(k)})).
\end{equation*}
Proposition~\ref{prop:escape} now gives $\norm{v}_2\ge k$ for every $k\in\N$, which is impossible. Thus every character is bounded on bounded sets in the unital case.

To treat a nonunital algebra $A$, give its unitisation $A^\#=A\oplus\C$ the multiplication
\begin{equation*}
  (a,\lambda)(b,\mu)=(ab+\lambda b+\mu a,\lambda\mu)
\end{equation*}
and the seminorms $p^\#(a,\lambda)=p(a)+\abs{\lambda}$, where $p$ ranges over a defining family of submultiplicative seminorms on $A$. These make $A^\#$ a commutative unital Arens--Michael algebra. A character $\chi$ on $A$ extends to the character
\begin{equation*}
  \chi^\#\colon A^\#\to\C,\qquad \chi^\#(a,\lambda)=\chi(a)+\lambda.
\end{equation*}
By the unital case, $\chi^\#$ is bounded on bounded sets. The inclusion $A\to A^\#$ is continuous, so its restriction $\chi$ has the same property.
\end{proof}

\begin{proof}[Proof of Corollary~\ref{cor:continuity}]
By Theorem~\ref{thm:boundedness}, the character $\chi$ is bounded on bounded sets. If it were discontinuous, we could choose $x_n\in A$ with $p_n(x_n)\le2^{-n}$ and $\abs{\chi(x_n)}\ge n$ for every $n\in\N$. Then $x_n\to0$, so $\{x_n:n\in\N\}$ would be a bounded set on which $\chi$ is unbounded. This contradiction proves continuity.

We can therefore choose $C>0$ and $n\in\N$ such that $\abs{\chi(a)}\le Cp_n(a)$ for all $a\in A$. Applying this estimate to powers of $a$ and using submultiplicativity, we obtain, for every $m\in\N$,
\begin{equation*}
  \abs{\chi(a)}^m=\abs{\chi(a^m)}\le Cp_n(a^m)\le Cp_n(a)^m.
\end{equation*}
Taking $m$th roots and letting $m\to\infty$ gives the required seminorm bound.
\end{proof}

\begin{remark}
Completeness is essential in both results, as the algebra $c$ of convergent complex sequences shows. Equip $c$ with the seminorms $p_n(a)=\max_{1\le j\le n}\abs{a_j}$ and consider the character $\chi(a)=\lim_{j\to\infty}a_j$. For each $m\in\N$, let $u^{(m)}\in c$ be zero in its first $m$ coordinates and equal to $m$ thereafter. Then $u^{(m)}\to0$ in every $p_n$, whereas $\chi(u^{(m)})=m$. Thus $\chi$ is unbounded on a bounded set and is discontinuous. In this topology, $c$ is incomplete and has completion $\C^{\N}$ with the product topology.
\end{remark}

\par\medskip\noindent\textbf{Acknowledgements.}
The author acknowledges funding from the EPSRC (grant number EP/W524438/1) in support of his PhD studies. He thanks his brother, Pablo Acuaviva, for helpful conversations about AI and mathematics.

\par\medskip\noindent\textbf{AI statement.}
While preparing the survey \emph{Mathematical Discovery in the Wild}~\cite{AcuavivaAcuaviva2026}, the author proposed Michael's problem for our experiments with large language models. Those attempts, and a later attempt with GPT-5.6, were unsuccessful. The proof presented here was essentially produced by GPT 6.0 Astra and then edited by the author.

\end{document}